\title
{On a central limit theorem in renewal theory}
\date{22 May, 2023}
\author{Svante Janson}
\thanks{Supported by the Knut and Alice Wallenberg Foundation}
\address{Department of Mathematics, Uppsala University, PO Box 480,
SE-751~06 Uppsala, Sweden}
\email{svante.janson@math.uu.se}
\newcommand\urladdrx[1]{{\urladdr{\def~{{\tiny$\sim$}}#1}}}
\subjclass[2020]{} 
\numberwithin{equation}{section}
\renewcommand\le{\leqslant}
\renewcommand\ge{\geqslant}
\theoremstyle{plain}
\newtheorem{theorem}{Theorem}[section]
\newtheorem{lemma}[theorem]{Lemma}
\newtheorem{corollary}[theorem]{Corollary}
\theoremstyle{definition}
\newcommand\xqed[1]{%
    \leavevmode\unskip\penalty9999 \hbox{}\nobreak\hfill
    \quad\hbox{#1}}
\newtheorem{exampleqqq}[theorem]{Example}
\newenvironment{example}{\begin{exampleqqq}}
  {\xqed{$\triangle$}\end{exampleqqq}}
\newtheorem{remarkqqq}[theorem]{Remark}
\newenvironment{remark}{\begin{remarkqqq}}
  {\xqed{$\triangle$}\end{remarkqqq}}
\newtheorem{definition}[theorem]{Definition}
\theoremstyle{remark}
\newenvironment{ack}{\section*{Acknowledgement}}{}
\newcounter{dummy}
\newcommand\myitem[1][]{\item[#1]\refstepcounter{dummy}\def\@currentlabel{#1}}
\newenvironment{romenumerate}[1][-10pt]{
\addtolength{\leftmargini}{#1}\begin{enumerate}
 }{\end{enumerate}}
\newcounter{oldenumi}
{\setcounter{oldenumi}{\value{enumi}}
\begin{romenumerate} \setcounter{enumi}{\value{oldenumi}}}
{\end{romenumerate}}
\newcounter{thmenumerate}
\newcounter{xenumerate}   
\newcommand\pfitemx[1]{\par#1:}
\newcommand\pfitemref[1]{\pfitemx{\ref{#1}}}
\newcommand{\refT}[1]{Theorem~\ref{#1}}
\newcommand{\refTs}[1]{Theorems~\ref{#1}}
\newcommand{\refL}[1]{Lemma~\ref{#1}}
\newcommand{\refR}[1]{Remark~\ref{#1}}
\newcommand{\refS}[1]{Section~\ref{#1}}
\newcommand{\refD}[1]{Definition~\ref{#1}}
\xdef\klockan{\the\count1.0\the\count255}
\xdef\klockan{\the\count1.\the\count255}\fi
\newcommand{\sumko}{\sum_{k=0}^\infty}
\newcommand{\sumno}{\sum_{n=0}^\infty}
\newcommand{\sumn}{\sum_{n=1}^\infty}
\newcommand{\sumin}{\sum_{i=1}^n}
\newcommand\set[1]{\ensuremath{\{#1\}}}
\newcommand\bigset[1]{\ensuremath{\bigl\{#1\bigr\}}}
\newcommand\lrset[1]{\ensuremath{\left\{#1\right\}}}
\newcommand\bigpar[1]{\bigl(#1\bigr)}
\newcommand\Bigpar[1]{\Bigl(#1\Bigr)}
\newcommand\lrpar[1]{\left(#1\right)}
\newcommand\bigsqpar[1]{\bigl[#1\bigr]}
\newcommand\sqpar[1]{[#1]}
\newcommand\lrsqpar[1]{\left[#1\right]}
\newcommand\cpar[1]{\{#1\}}
\newcommand\abs[1]{\lvert#1\rvert}
\newcommand\bigabs[1]{\bigl\lvert#1\bigr\rvert}
\newcommand\lrabs[1]{\left\lvert#1\right\rvert}
\def\rompar(#1){\textup(#1\textup)}    
\newcommand\xfrac[2]{#1/#2}
\def\xexp(#1){e^{#1}}
\newcommand\ntoo{\ensuremath{{n\to\infty}}}
\newcommand\ttoo{\ensuremath{{t\to\infty}}}
\newcommand\ctoo{\ensuremath{{c\to\infty}}}
\newcommand\bmin{\land}
\newcommand\bmax{\lor}
\newcommand\punkt{\xperiod}    
\newcommand\iid{i.i.d\punkt}    
\newcommand\ie{i.e\punkt}
\newcommand\eg{e.g\punkt}
\newcommand\cf{cf\punkt}
\newcommand{\as}{a.s\punkt}
\newcommand{\aex}{a.e\punkt}
\newcommand{\tend}{\longrightarrow}
\newcommand\dto{\overset{\mathrm{d}}{\tend}}
\newcommand\pto{\overset{\mathrm{p}}{\tend}}
\newcommand\asto{\overset{\mathrm{a.s.}}{\tend}}
\newcommand\eqd{\overset{\mathrm{d}}{=}}
\newcommand\bbR{\mathbb R}
\newcommand\bbN{\mathbb N}
\newcounter{CC}
\newcounter{cc}
\newcommand\E{\operatorname{\mathbb E}{}} 
\renewcommand\P{\operatorname{\mathbb P{}}}
\newcommand\Var{\operatorname{Var}}
\newcommand\ga{\alpha}
\newcommand\gb{\beta}
\newcommand\gd{\delta}
\newcommand\gl{\lambda}
\newcommand\gss{\sigma^2}
\newcommand\eps{\varepsilon}
\renewcommand\phi{\xxx}  
\newcommand\cA{\mathcal A}
\newcommand\indic[1]{\boldsymbol1\cpar{#1}}
\newcommand\qq{^{1/2}}
\newcommand\qqw{^{-1/2}}
\newcommand\intoo{\int_0^\infty}
\newcommand\ooo{[0,\infty)}
\newcommand\dd{\,\mathrm{d}}
\newcommand{\ui}{uniformly integrable}
\newcommand\xoo{_1^\infty}
\newcommand\xooo{_0^\infty}
\newcommand\re{_{r,\eps}}
\newcommand\EE{\E}
\newcommand\Nt{{N(t)}}
\newcommand\Nti{{N(t)+1}}
\newcommand\taut{{\tau(t)}}
\newcommand\hM{\widehat M}
\newcommand\hZ{\widehat Z}
\newcommand\heta{\widehat \eta}
\newcommand\cadlag{c\`adl\`ag}
\newcommand\ac{c}
\newcommand{\Holder}{H\"older}
\begin{document}

\begin{abstract} 
Serfozo (2009, Theorem 2.65) gives a useful central limit theorem 
for processes with regenerative increments.
Unfortunately, there is a gap in the proof.
We fill this gap, and at the same time we weaken the assumptions.
Furthermore, we give conditions for moment convergence in this setting.
We give also further results complementing results in Serfozo (2009)
on the law of large numbers and estimates for the mean;
in particular, we show that there is a gap between conditions for the weak
and strong laws of large numbers.
\end{abstract}

\maketitle

\section{Introduction}\label{S:intro}

The standard setting in renewal theory is that we have a stochastic process
(in continuous or discrete time) such that some event occurs at 
random times $0<T_1<T_2<\dots$, and the process ``starts again'' at each
such event.
Formally this means that the times between renewals 
$\xi_n:=T_n-T_{n-1}$, $n\ge1$, 
(where we define $T_0:=0$)
are \iid{}
(independent and identically distributed) random variables.
We thus have
\begin{align}\label{a1}
  T_n:=\sumin \xi_i,
\qquad n\ge0.
\end{align}
with $(\xi_i)\xoo$ \iid, 
where $\xi_1$ may be any (strictly) positive random variable.
We then define, for $t\ge0$,
\begin{align}\label{a2}
  N(t)&:=\max\set{n:T_n\le t}=\sumn\indic{T_n\le t},
\\\label{a3}
\tau(t)&:=\min\set{n:T_n>t}=N(t)+1.
\end{align}
It is well-known that $T_n\to\infty$ \as{} as \ntoo, and thus $N(t)$ and
$\tau(t)$ are well defined for any $t\ge0$.
Note that by the definitions,
\begin{align}\label{a4}
T_\Nt \le t < T_{\Nti}=T_{\tau(t)}.
\end{align}

In applications it is common to study the values of another stochastic
process at the renewal times $T_n$. One common version of this is to
let $(\eta_i)\xoo$ be another sequence of random variables such that
the random vectors $(\xi_i,\eta_i)$, $i\ge1$, are \iid,
and define their partial sums
\begin{align}\label{a5}
  V_n:=\sumin \eta_i;
\end{align}
we then may 
consider $V_{N(t)}$ or $V_{\tau(t)}$, which can be interpreted as the values
of a stochastic process at the time $T_{N(t)}$ or $T_{\tau(t)}$.
Asymptotic results such as a law of large numbers and a central limit theorem
for $V_{N(t)}$ or $V_{\tau(t)}$ are well known, see \eg{}
\cite[Section 4.2, including Remark 4.2.10]{Gut:SRW}.

A version of this is that we are given another 
(real-valued) stochastic process $Z(t)$
defined for all times $t\ge0$ such that $Z(t)$ also ``starts again'' at
each renewal time $T_n$. We are interested in asymptotic results for $Z(t)$
as \ttoo, and by \eqref{a4}, we may under suitable assumptions approximate
$Z(t)$ by $V_{N(t)}$ or $V_{\tau(t)}$ (with $\eta_n$ given below)
and obtain results for $Z(t)$ from
results of the type just mentioned.
To make this formal, \citet[Definition 2.52, Section 2.10]{Serfozo} 
makes the following definition:
\begin{definition}\label{D1}
   The process $Z(t)$ has
\emph{regenerative increments over the times $T_n$} if
$Z(0)=0$ and
the increments
\begin{align}\label{z1}
  \zeta_n:=\bigpar{\xi_n,\,\set{Z(t+T_{n-1})-Z(T_{n-1}): 0\le t\le \xi_n}},
\qquad n\ge1,
\end{align}
are i.i.d.
\end{definition}

\begin{remark}
Note that the second component of $\zeta_n$ is a stochastic process, defined 
over the random interval $[0,\xi_n]$; we may for the purpose of this
definition regard it as a process on $\ooo$ stopped at $\xi_n$, 
\ie, equal to $Z(\xi_n+T_{n-1})-Z(T_{n-1})$ for all $t\ge\xi_n$. 
In general we may regard this process as an
element of the product space $\bbR^{\ooo} $
but we will
also need some regularity property.
We assume, for convenience, that $Z(t)$ is \cadlag{}
(right-continuous with left limits, also written $Z\in D\ooo$); 
then the second component of
$\zeta_n$ is also \cadlag.
\end{remark}

\begin{remark}
The definition given in  \cite[Definition 2.52]{Serfozo} actually differs
slightly from the one above, using only the interval $0\le t<\xi_n$
instead of $0\le t\le\xi_n$ in \eqref{z1}.
This is obviously a typo, and should be interpreted as in \eqref{z1}, since
otherwise the definition would be trivially satisfied by 
the process $Z(t):=X_{N(t)}$ for \emph{any} random sequence $(X_n)\xooo$
with $X_0=0$;
this evidently cannot imply any limit results.
\end{remark}

Suppose that $Z(t)$ has regenerative increments over $T_n$, and define
\begin{align}\label{eta}
  \eta_n:=Z(T_n)-Z(T_{n-1}),
\qquad n\ge1.
\end{align}
It follows from the definition above, taking $t=\xi_n$ in \eqref{z1}, that 
the sequence of pairs ${(\xi_n,\eta_n)}$, $n\ge1$, is i.i.d.
Hence, we may define $V_n$ by \eqref{a5}, and note that \eqref{eta} yields
\begin{align}\label{z2}
  V_n = Z(T_n).
\end{align}
Consequently, using \eqref{a4}, we have for any $t\ge0$
\begin{align}\label{z3}
  Z(t)= V_{N(t)} + \bigpar{Z(t'+T_{N(t)})-Z(T_{N(t)})},
\end{align}
where $t':=t-T_{N(t)}\in[0,\xi_{N(t)+1})$.
We define, following \cite{Serfozo},
\begin{align}\label{mn}
  M_n:=\sup_{T_{n-1} \le t\le T_n}|Z(t)-Z(T_{n-1})|,
\qquad n\ge1.
\end{align}
In particular,
\begin{align}\label{m1}
M_1:= \sup_{0 \le t\le T_1}|Z(t)|.
\end{align}
Note that it follows from \refD{D1} that the random variables $M_n$ are i.i.d.

It is clear from \eqref{z3} that with suitable conditions on $M_n$,
asymptotic results for $Z(T)$ follow from results for $V_{N(t)}$.
In particular, \citet{Serfozo} 
gives the following central limit theorem,
which is well suited for applications.

\begin{theorem}[{\cite[Theorem 2.65]{Serfozo}}]\label{T1}
Suppose $Z(t)$ is a stochastic process with
regenerative increments over $T_n$ such that $\mu := \E[T_1 ]$, 
$a:= \E[Z(T_1)]/\mu$,
and $\gss:= \Var[Z(T_1 ) - aT_1 ]$ are finite.
In addition, let
$M_1$ be defined by \eqref{m1},
and assume 
$M_1<\infty $ a.s.
Then
\begin{align}\label{t1}
\frac{Z(t)- at}{\sqrt t} \dto N (0, \gss/\mu),  
\qquad \text{as \ttoo}.
\end{align}
\end{theorem}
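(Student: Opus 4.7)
The plan is to rewrite $Z(t)-at$ as a sum of \iid{} variables evaluated at the random index $N(t)$, plus two boundary corrections that are negligible at scale $\sqrt t$. Set $X_i := \eta_i - a\xi_i$; by \eqref{eta} and the definition of $a$, the sequence $(X_i)\xoo$ is \iid{} with $\E X_1 = 0$ and $\Var X_1 = \gss$, and the partial sums $W_n := \sum_{i=1}^n X_i$ equal $Z(T_n) - aT_n$. From \eqref{z3},
\begin{align*}
 Z(t) - at = W_{N(t)} + a\bigpar{T_{N(t)} - t} + \bigpar{Z(t) - Z(T_{N(t)})}.
\end{align*}
By \eqref{a4} and \eqref{mn}, the last two summands are bounded in absolute value by $|a|\,\xi_{N(t)+1}$ and $M_{N(t)+1}$ respectively, so the theorem reduces to three claims: (a) $W_{N(t)}/\sqrt t \dto N(0,\gss/\mu)$; (b) $\xi_{N(t)+1}/\sqrt t \pto 0$; (c) $M_{N(t)+1}/\sqrt t \pto 0$.

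Claim (a) is a direct application of Anscombe's theorem: the ordinary CLT gives $W_n/\sqrt n \dto N(0,\gss)$, the elementary renewal theorem yields $N(t)/t \to 1/\mu$ \as, and these two facts combine to give $W_{N(t)}/\sqrt t \dto N(0,\gss/\mu)$.

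For (b) and (c), since $\sqrt t \to \infty$ it suffices to establish tightness of the families $\{\xi_{N(t)+1}\}_{t\ge 0}$ and $\{M_{N(t)+1}\}_{t\ge 0}$. I treat (c); (b) is the special case $M_1 := \xi_1$. Conditioning on $N(t)+1=n$ and using that $T_{n-1}$ is independent of the $n$-th regenerative increment $\zeta_n$, one obtains
\begin{align*}
 \P\bigpar{M_{N(t)+1} > K} = \int_{[0,t]} \P\bigpar{\xi_1 > t-s,\; M_1 > K}\, dU(s),
\end{align*}
where $U(s) := \sum_{n\ge 0} \P(T_n \le s)$ is the renewal measure. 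Viewed as a function of $u = t-s$, the integrand is nonincreasing and dominated by $\P(\xi_1 > u)$, whose integral on $\ooo$ equals $\mu < \infty$; hence it is directly Riemann integrable. The key renewal theorem (non-lattice case; the lattice case is parallel) therefore yields
\begin{align*}
 \lim_{t\to\infty} \P\bigpar{M_{N(t)+1} > K} = \tfrac{1}{\mu}\,\E\bigsqpar{\xi_1 \indic{M_1 > K}},
\end{align*}
and the right-hand side tends to $0$ as $K\to\infty$ by dominated convergence, using only $M_1 < \infty$ \as{} and $\E\xi_1 < \infty$.

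The main obstacle, and evidently the gap in \cite{Serfozo}, is exactly the tightness step (c): controlling $M_{N(t)+1}$ at the random time $N(t)+1$ under the minimal assumption $M_1 < \infty$ \as{} (and without any extra moment hypothesis, so that a union bound over $n \lesssim t/\mu$ is not available). Applying the key renewal theorem to the joint tail $\P(\xi_1 > u,\, M_1 > K)$ is what converts this minimal hypothesis into the required tightness in one clean step, and the same device handles the inspection-paradox quantity $\xi_{N(t)+1}$.
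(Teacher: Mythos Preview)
Your argument is correct and follows essentially the same route as the paper: the same decomposition into a randomly-stopped centered sum handled by Anscombe's theorem, and the same key renewal theorem computation showing $\lim_{t\to\infty}\P(M_{N(t)+1}>K)=\mu^{-1}\E[\xi_1\indic{M_1>K}]\to0$ to obtain tightness of the boundary term under only $M_1<\infty$ a.s. The paper packages $M_n+|a|\xi_n$ into a single sequence $Y_n$ and states the tightness as a separate lemma, but the substance is identical to your (b) and (c).
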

We include the case $\gss=0$, letting $N(0,0)$ denote (the distribution of) 0.

\begin{remark}
The theorem stated in \cite{Serfozo} also assumes $\E M_1<\infty$, but
the proof below shows that it suffices to assume $M_1<\infty$ a.s., as done
here. 
\end{remark}

Unfortunately, there is a gap in the proof given in \cite{Serfozo}, see
\refR{Rgap}, so we
give a proof filling that gap (under our, weaker, conditions)
in \refS{Spf}.

In \refS{Smom}, we give conditions for moment convergence in 
\refT{T1}.
Furthermore, we give in \refS{SLLN} a weak law of large numbers,
complementing the strong law in \cite{Serfozo}; we show that the weak law
holds under weaker conditions than the strong law.
Finally, \refS{Smean} gives  further  estimates for the mean under various
moment conditions. 

\begin{remark}
  We let throughout the paper the time parameter $t\in\ooo$ be a continuous
variable. Results for a discrete time parameter $t\in\bbN$ follow
immediately, by assuming
that the times $T_n$ are integer-valued and then considering only $t\in\bbN$.
\end{remark}

\subsection{Some notation}\label{SSnot}

We use the notation introduced above throughout the paper.
In particular, $\mu$, $a$, and $\gss$ have the same meanings as in
\refT{T1}.
We also use the \emph{renewal function}
\begin{align}\label{majU}
  U(t):=\sumno \P\bigpar{T_n\le t}=\E N(t)+1.
\end{align}

We let $\dto$, $\pto$, and $\asto$ denote convergence in distribution,
probability, 
and almost surely,
respectively.
Unspecified limits are as \ttoo.

For real $x,y$, we let $x\bmin y:=\min\set{x,y}$
and $x\bmax y:=\max\set{x,y}$.

``Decreasing'' is interpreted in the weak sense.
\section{Proof of \refT{T1}}\label{Spf}

We basically follow the proof in \cite[pp.~136--137]{Serfozo}.
We define as there
\begin{align}\label{b1}
  Z'(t):=\frac{Z(T_\Nt)-aT_\Nt}{\sqrt t}
=t\qqw\sum_{i=1}^{\Nt}\bigpar{\eta_i-a\xi_i},
\end{align}
where we used \eqref{eta} and \eqref{a1},
and note that $X_i:=\eta_i-a\xi_i$ are \iid{} random variables with 
$\E X_i=\E X_1=\E[Z(T_1)]-a\E[T_1]= 0$
and $\Var X_i=\gss$. Hence it follows from 
Anscombe's theorem \cite[Theorem 2.64]{Serfozo}
(see also \cite[Section 1.3]{Gut:SRW}; 
alternatively, use instead Donsker's theorem \cite[Theorem 7.7.13]{Gut}),
together with the (weak) law of large numbers $\Nt/t\pto 1/\mu$,
\cite[Corollary 2.11]{Serfozo}
that
\begin{align}\label{b2}
  Z'(t)\dto N\bigpar{0,\gss/\mu},
\qquad\text{as \ttoo}.
\end{align}
(The case $\gss=0$ is trivial since then $X_i=0$ and thus $Z'(t)=0$ a.s.)

Hence, by Cram\'er--Slutsky's theorem 
\cite[Theorem 5.11.4]{Gut},
it suffices to show that
\begin{align}\label{b3}
 \frac{Z(t)-at}{\sqrt t}-Z'(t) \pto0.
\end{align}
We have, by \eqref{b1},
\begin{align}\label{b4}
  \frac{Z(t)-at}{\sqrt t}-Z'(t) 
=
\frac{Z(t)-Z_\Nt-a(t-T_\Nt)}{\sqrt t}
\end{align}
and thus, recalling \eqref{a4}, \eqref{mn},  and \eqref{a1},
\begin{align}\label{b5}
 \lrabs{ \frac{Z(t)-at}{\sqrt t}-Z'(t)} 
\le
\frac{M_{\Nt+1}+|a|\xi_{\Nt+1}}{\sqrt t}
=:
\frac{Y_{\Nt+1}}{\sqrt t},
\end{align}
where we let
\begin{align}\label{b6}
  Y_n:=M_{n}+|a|\xi_{n}.
\end{align}
Consequently, to show \eqref{b3}, and thus \refT{T1}, it suffices to show
that
\begin{align}  \label{b7}
\frac{ Y_\Nti}{t\qq} \pto0,
\end{align}
or, equivalently,
\begin{align}  \label{b8}
\frac{Y_\Nti}{\Nt\qq} \pto0,
\end{align}
By \eqref{b6}, \eqref{mn} and \refD{D1}, the random vectors $(\xi_n,Y_n)$
are \iid, and thus \eqref{b7} follows from \refL{L1} below (with
$\gd(t):=t\qqw$), which completes the proof of \refT{T1}.
\qed
  
\begin{remark}\label{Rgap}
  The gap in the proof in \cite{Serfozo} is the claim 
$n\qqw Y_n\eqd n\qqw Y_1\asto 0$ as \ntoo{} made there; 
although $Y_n\eqd Y_1$ and $n\qqw Y_1\asto 0$ as \ntoo,
this only shows 
$Y_n/n\qq\pto0$, which in general is not enough to imply \eqref{b8}.
In fact, it is a well-known  consequence 
of the Borel--Cantelli lemmas 
(see \cite[Proposition 6.1.1]{Gut})
that,
for any \iid{} sequence $Y_n$, we have $Y_n/n\qq\asto0$ 
as \ntoo{}
(which does imply \eqref{b8})
if and only if 
$\EE{ Y_1^2}<\infty$, which  requires the stronger assumption 
$\EE{M_1^2}<\infty$
(and also  $\EE{T_1^2}<\infty$ unless $a=0$).
\end{remark}

We used in the proof the following lemma.
(For related results under moment assumptions, see \cite[Theorem
1.8.1]{Gut:SRW}.)
Recall that a family $(X_\ga)_{\ga\in\cA}$ of random variables is \emph{tight}
(a.k.a.\ \emph{stochastically bounded}) if for every $\eps>0$, there exists 
$\ac>0$ such that $\P(|X_\ga|>\ac)<\eps$ for all $\ga\in\cA$.
Recall also that (the distribution of) $\xi_1$ is \emph{arithmetic} if there
exists $d>0$ such that $\xi_1\in d\bbN=\set{d,2d,\dots}$ a.s.; then the
largest such $d$ is called the \emph{span} of $\xi_1$.

\begin{lemma}\label{L1}
With the notations above, 
let $T_n$ be a sequence of renewal times with $\E T_1=\E \xi_1<\infty$,
and let $Y_n$, $n\ge1$, be another sequence of random variables such that
the random vectors $(\xi_n,Y_n)$ are i.i.d.
Then the family of random variables
\set{Y_{N(t)+1}:t\ge0} is tight.
In particular, if
$\gd(t)$ is any positive function such that $\gd(t)\to0$
as \ttoo, then
  \begin{align}\label{l1}
\gd(t)Y_{N(t)+1}\pto 0
\qquad \text{as }\ttoo 
. \end{align}
\end{lemma}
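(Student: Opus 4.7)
The plan is to reduce \eqref{l1} to tightness of $\set{Y_{N(t)+1}:t\ge0}$: given tightness, for any $\eta,\eps>0$ I choose $c>0$ with $\sup_{t\ge0}\P(|Y_{N(t)+1}|>c)<\eps$, and then $\P(\gd(t)|Y_{N(t)+1}|>\eta)=\P(|Y_{N(t)+1}|>\eta/\gd(t))\le\eps$ as soon as $\eta/\gd(t)\ge c$, which holds for all large $t$ since $\gd(t)\to0$.

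For tightness, my starting point would be the identity, valid for every $c>0$,
\begin{align}\label{planA}
  \P\bigpar{|Y_{N(t)+1}|>c}=\int_{[0,t]}g(t-s)\dd U(s),
\end{align}
where $g(u):=\P(|Y_1|>c,\,\xi_1>u)$ and $U$ denotes the renewal measure from \eqref{majU}. To prove \eqref{planA}, I would write the event $\set{N(t)+1=n}$ as $\set{T_{n-1}\le t,\,\xi_n>t-T_{n-1}}$ and use that $(\xi_n,Y_n)$ is independent of $T_{n-1}$; summing over $n\ge1$ then gives $\sum_{n\ge0}\E\bigsqpar{g(t-T_n)\indic{T_n\le t}}=\int_{[0,t]}g(t-s)\dd U(s)$.

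The crucial technical step would be the uniform renewal-increment bound $U(s+1)-U(s)\le U(1)<\infty$ for every $s\ge0$. I would derive it by conditioning on the forward recurrence time $\gam(s):=T_{N(s)+1}-s$ and invoking the renewal property: on $\set{\gam(s)\le1}$ the additional renewals in $(s+\gam(s),s+1]$ form an independent fresh renewal process on $[0,1-\gam(s)]$, whence $U(s+1)-U(s)=\E\bigsqpar{\indic{\gam(s)\le1}U(1-\gam(s))}\le U(1)$. Combined with the monotonicity of $g$, partitioning $[0,t]$ into the intervals $(t-k-1,t-k]\cap[0,t]$ would yield
\begin{align}\label{planB}
  \int_{[0,t]}g(t-s)\dd U(s)\le U(1)\sum_{k\ge0}g(k)\le U(1)\biggpar{g(0)+\intoo g(u)\dd u}=U(1)\bigpar{\P(|Y_1|>c)+\E[\xi_1\indic{|Y_1|>c}]}.
\end{align}
As $c\to\infty$, both terms on the right vanish: the first trivially, the second by dominated convergence since $\xi_1\indic{|Y_1|>c}\le\xi_1$ with $\E\xi_1<\infty$. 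This bound is uniform in $t\ge0$ and thus gives tightness.

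The main obstacle will be the uniform renewal-increment bound $U(s+1)-U(s)\le U(1)$; this is the one place where the renewal structure of $(T_n)$ enters non-trivially, and, together with $\E\xi_1<\infty$, it lets us avoid any moment assumption on $Y_1$ that a direct Markov-type approach would require.
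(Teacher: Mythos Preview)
Your proof is correct and, in fact, takes a different and somewhat more elementary route than the paper's.

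Both arguments begin with the same convolution identity \eqref{planA} (the paper's $h_c$ is your $g$). From there the paper applies the key renewal theorem to $h_c$---checking direct Riemann integrability, distinguishing the arithmetic and non-arithmetic cases---to obtain the \emph{limit} $\lim_{t\to\infty}\P(Y_{N(t)+1}>c)=\mu^{-1}\int_0^\infty h_c$, shows this limit tends to $0$ as $c\to\infty$, and then patches the range $t\le t_0$ by a separate ad~hoc argument. You bypass all of this by invoking the elementary subadditivity bound $U(s+1)-U(s)\le U(1)$ (which you derive cleanly via the forward recurrence time), and thereby obtain a bound on $\P(|Y_{N(t)+1}|>c)$ that is uniform in $t\ge0$ from the outset. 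This avoids the key renewal theorem, the arithmetic/non-arithmetic split, and the small-$t$ patch; the paper's approach, on the other hand, yields the exact limiting value of $\P(Y_{N(t)+1}>c)$, which is more than what is needed here.
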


\begin{proof}
By replacing $Y_n$ with $|Y_n|$, we may for convenience assume that $Y_n\ge0$.
Moreover, if $\xi_1$ is arithmetic, with span $d>0$, then it suffices to
consider $t\in d\bbN$. 

Let $\ac>0$.
Then, since $(\xi_{n+1},Y_{n+1})$ is independent of $T_n$ and further
$(\xi_{n+1},Y_{n+1})\eqd (\xi_{1},Y_{1})$,
\begin{align}\label{maj1}
  \P\bigpar{Y_{N(t)+1}>\ac}
&
=\E \sumno \indic{T_n\le t<T_{n+1}, \; Y_{n+1}>\ac}
\notag\\&
=\E \sumno \P\bigpar{T_n\le t<T_{n+1}, \; Y_{n+1}>\ac\mid T_n}
\notag\\&
=\E \sumno \P\bigpar{0 \le t-T_n< \xi_{n+1}, \; Y_{n+1}>\ac\mid T_n}
\notag\\&
=\E \sumno h_\ac(t-T_n),
\end{align}
where
\begin{align}
  h_\ac(s):=\indic{s\ge0}\P\bigpar{\xi_1 > s,\; Y_1>\ac}
=\indic{s\ge0}\P\bigpar{T_1 > s,\; Y_1>\ac}
.\end{align}
Using the renewal function $U(t)$ defined in \eqref{majU},
we can write \eqref{maj1} as
\begin{align}\label{maj2}
    \P\bigpar{Y_{N(t)+1}>\ac}=U*h_\ac(t)
:=\intoo h(t-u)\dd U(u)
.\end{align}
Note that $h_\ac(s)\ge0$ and that $h_\ac(s)$ is  decreasing 
on $\ooo$ with
\begin{align}\label{maj3}
  \intoo h_\ac(s)\dd s
\le\intoo \P(\xi_1>s)\dd s = \E \xi_1=\mu<\infty.
\end{align}
Hence, $h_\ac(s)$ is directly Riemann integrable, and thus the key renewal
theorem 
(see \cite[Theorems 2.35--37]{Serfozo} or \cite[Theorem 2.4.3]{Gut:SRW})
yields (in both the arithmetic and non-arithmetic cases)
\begin{align}\label{maj4}
\lim_{\ttoo}
      \P\bigpar{Y_{N(t)+1}>\ac} 
=
\frac{1}{\mu}\intoo h_\ac(s)\dd s
= \frac{1}{\mu}\intoo \P\bigpar{\xi_1 > s,\; Y_1>\ac}\dd s
=:\gl_\ac.
\end{align}
Since $\P\bigpar{\xi_1 > s,\; Y_1>\ac} \le \P\bigpar{\xi_1 > s}$ and
\begin{align}
  \intoo \P\bigpar{\xi_1 > s}\dd s = \E \xi_1<\infty,
\end{align}
dominated convergence yields
\begin{align}\label{maja}
\lim_{\ctoo}\gl_\ac=
\lim_{\ctoo}\frac{1}{\mu}\intoo \P\bigpar{\xi_1 > s,\; Y_1>\ac}\dd s  
=
\frac{1}{\mu}\intoo \lim_{\ctoo}\P\bigpar{\xi_1 > s,\; Y_1>\ac}\dd s  
=0
.\end{align}

Let $\eps>0$. Then \eqref{maja} shows that we may choose $\ac>0$ such that
$\gl_\ac<\eps$, and then \eqref{maj4} shows that for all sufficiently large
$t$, we have
\begin{align}\label{majb}
  \P\bigpar{Y_{N(t)+1}>\ac}<\eps.
\end{align}
This is enough to imply \eqref{l1}, since 
for large $t$ we also have $\ac\gd(t)<\eps$, and consequently
\begin{align}
  \P\bigpar{\gd(t)Y_{N(t)+1}>\eps}
\le
  \P\bigpar{\gd(t)Y_{N(t)+1}>\gd(t)\ac}
=
\P\bigpar{Y_{N(t)+1}>\ac}<\eps.
\end{align}

Moreover, we have shown that for every $\eps>0$, there exists $\ac$ and
$t_0$ such  that \eqref{majb} holds for $t\ge t_0$.
There exists $n_0$ such that $\P(N(t_0)+1>n_0)<\eps/2$,
and we may increase $\ac$ so that $\P(Y_n>\ac) < \eps/(2n_0)$ for all
$n\le n_0$.
Then, for every $t\le t_0$,
\begin{align}
  \P(Y_{N(t)+1}>\ac) \le \P\bigpar{N(t)+1>n_0}
+\sum_{n=1}^{n_0} \P\bigpar{Y_n>\ac} <\eps.
\end{align}
Hence, there exists $\ac$ such that \eqref{majb} holds for all $t\ge0$,
which proves that the family $\set{Y_{N(t)+1}:t\ge0}$ is tight.
\end{proof}

\section{Moment convergence}\label{Smom}

If we assume further moment conditions, we also have convergence of moments
in \refT{T1}.

\begin{theorem}
  \label{Tmom}
Let $r\ge2$, and
assume in addition to the assumptions of \refT{T1} that
$\E[T_1^r]<\infty$ and $\E[M_1^r]<\infty$.
Then
the family of random variables
\begin{align}\label{tmom1}
  \lrset{\lrabs{\frac{Z(t)- at}{\sqrt t}}^r: t\ge1}
\end{align}
is uniformly integrable, and consequently \eqref{t1} holds with convergence
of all moments (absolute and ordinary) of orders $\le r$.
\end{theorem}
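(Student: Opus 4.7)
The plan is to establish uniform integrability of the family in \eqref{tmom1}; given this, convergence of all moments of orders $\le r$ in \eqref{t1} follows from \refT{T1} via Vitali's convergence theorem.

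Following the decomposition in \refS{Spf}, I would write
\begin{equation*}
\frac{Z(t)-at}{\sqrt t}=Z'(t)+\frac{R(t)}{\sqrt t},
\end{equation*}
where $Z'(t)=S_{\Nt}/\sqrt t$ with $S_n:=\sum_{i=1}^n X_i$, $X_i:=\eta_i-a\xi_i$, and $R(t):=Z(t)-at-\sqrt t\,Z'(t)$ satisfies $|R(t)|\le Y_{\Nti}$, $Y_n:=M_n+|a|\xi_n$; cf.\ \eqref{b4}--\eqref{b6}. The new moment hypotheses give $\E Y_1^r<\infty$ and hence $\E|X_1|^r\le\E Y_1^r<\infty$. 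Thus it suffices to show \textbf{(a)} $\E Y_{\Nti}^r=o(t^{r/2})$, and \textbf{(b)} the family $\{|Z'(t)|^r:t\ge 1\}$ is uniformly integrable.

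For (a), I would rerun the conditioning argument from \refL{L1} with $Y_1^r$ in place of $\indic{Y_1>c}$, yielding
\begin{equation*}
\E Y_{\Nti}^r=U*g\,(t),\qquad g(s):=\E\bigsqpar{Y_1^r\indic{\xi_1>s}}.
\end{equation*}
Since $g$ is bounded, decreasing, and $g(s)\to 0$ as $s\to\infty$ (dominated convergence, using $\xi_1<\infty$ a.s.\ and $\E Y_1^r<\infty$), splitting $U*g(t)$ at a level $s_0$ with $g(s_0)<\eps$ and invoking Blackwell's renewal theorem gives $U*g(t)=o(t)=o(t^{r/2})$ for $r\ge 2$. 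Hence $Y_{\Nti}^r/t^{r/2}\to 0$ in $L^1$, so this term is trivially uniformly integrable.

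For (b), the key input is moment convergence in the CLT for random-indexed sums: the classical fact that $\E X_1=0$ and $\E|X_1|^r<\infty$ imply $\E|S_n/\sqrt n|^r\to\E|N(0,\gss)|^r$, combined with a moment version of Anscombe's theorem for the index $\Nt$ (using the auxiliary estimate $\E \Nt^{r/2}=O(t^{r/2})$, which holds under $\E\xi_1^r<\infty$), gives $\E|Z'(t)|^r\to\E|N(0,\gss/\mu)|^r$; by Vitali this convergence of $r$-th absolute moments is equivalent to UI of $|Z'(t)|^r$. The main obstacle is precisely (b): a uniform $L^r$-bound alone does \emph{not} imply UI of the $r$-th power, so one cannot just stop at a Burkholder--Rosenthal bound and must either invoke moment convergence in the CLT together with its random-index version, or else run a truncation argument on $X_i$ (splitting into a bounded piece and a small-$L^r$ tail piece, applying a Burkholder--Rosenthal bound to the martingale $(S_n)$ at the stopping time $\Nti$ on each piece, and sending the truncation level to infinity in a controlled way). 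Combining (a) and (b) via Minkowski in $L^r$ then yields UI of $|Z(t)-at|^r/t^{r/2}$, completing the proof.
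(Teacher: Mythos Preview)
Your plan is correct and follows essentially the same decomposition as the paper; the differences are in packaging rather than substance.

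The paper works with $\tau(t)=N(t)+1$ and $Z''(t):=t^{-1/2}\sum_{i=1}^{\tau(t)}(\eta_i-a\xi_i)$ instead of your $Z'(t)$, because $\tau(t)$ is a stopping time; the difference $Z''(t)-Z'(t)$ is again bounded by $Y_{\tau(t)}/\sqrt t$, so nothing is lost, but the stopping-time property lets one cite off-the-shelf results. For your step~(b), the paper simply invokes \cite[Theorem 4.2.3(ii)]{Gut:SRW}, which is exactly the ``moment version of Anscombe'' you describe: it gives uniform integrability of $\{|Z''(t)|^r:t\ge1\}$ directly from $\E|\heta_1|^r<\infty$. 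Your alternative route via convergence of $r$-th absolute moments plus Vitali is also valid, but you would still need that theorem (or an equivalent truncation argument, as you note) to get the random-index moment convergence, so you are not really bypassing it. For your step~(a), the paper again cites a result of Gut, \cite[Theorem 1.8.1]{Gut:SRW}, whose content is precisely your $U*g$ estimate: $\{\tau(t)/t\}$ uniformly integrable plus $\E Y_1^r<\infty$ yields uniform integrability of $\{Y_{\tau(t)}^r/t\}$, hence of $\{(Y_{\tau(t)}/\sqrt t)^r\}$ since $r\ge2$. Your direct argument for~(a) via Blackwell is perfectly fine and gives the slightly weaker (but sufficient) conclusion $\E Y_{\tau(t)}^r=o(t)$.

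In short: same architecture, and your sketch is sound; the paper just replaces your hand-built steps by citations to \cite{Gut:SRW}.
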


\begin{proof}
It will be convenient to use $\tau(t)=N(t)+1$ instead of $N(t)$, since
$\tau(t)$ is a stopping time.  
We thus define, similarly to  \eqref{b1},
\begin{align}\label{d1}
  Z''(t):=\frac{Z(T_\taut)-aT_\taut}{\sqrt t}
=t\qqw\sum_{i=1}^{\taut}\bigpar{\eta_i-a\xi_i}.
\end{align}
Since $|\eta_n|\le M_n$ by \eqref{eta} and \eqref{mn}, 
we have, using also \eqref{b1} and \eqref{b6},
\begin{align}\label{d2}
  |Z''(t)-Z'(t)|
=\lrabs{\frac{\eta_\taut-a\xi_\taut}{\sqrt t}}
\le
\frac{M_{\taut}+|a|\xi_{\taut}}{\sqrt t}
=
\frac{Y_{\taut}}{\sqrt t}
,\end{align}
Thus \eqref{b5} yields
\begin{align}\label{d5}
 \lrabs{ \frac{Z(t)-at}{\sqrt t}-Z''(t)} 
\le
\frac{2Y_{\taut}}{\sqrt t}
\end{align}
and consequently
\begin{align}\label{d13}
 \lrabs{ \frac{Z(t)-at}{\sqrt t}} 
\le
\lrabs{Z''(t)}+
\frac{2Y_{\taut}}{\sqrt t}
.\end{align}

Since $Z(T_\taut)=V_\taut$ by \eqref{z2}, the uniform integrability
of $\bigset{|Z''(t)|^r:t\ge1}$ follows by \cite[Theorem 4.2.3(ii)]{Gut:SRW}
applied to $\sum_{i=1}^\taut\heta_i$ with $\heta_i:=\eta_i-a\xi_i$;
note that $\E |\eta_1|^r \le \E [M_1^r]<\infty$ and thus also 
$\E |\heta_1|^r <\infty$.

Furthermore, recalling $\xi_1=T_1$, the assumptions and \eqref{b6}
yield $\E|Y_1|^r<\infty$. The family
$\set{\tau(t)/t:t\ge1}$ is \ui{} by 
\cite[(2.5.6), see also the more general Theorem 3.7.1]{Gut:SRW}, 
and $\tau(t)$ are stopping times, and thus 
\cite[Theorem 1.8.1]{Gut:SRW} shows that the family
\bigset{|Y_\taut|^r/t:t\ge1} is \ui.
Since $r\ge2$, this implies that also the family
\bigset{\bigpar{|Y_\taut|/\sqrt t}^{r}:t\ge1} is \ui.

The uniform integrability of \eqref{tmom1} now follows by \eqref{d13}.
As is well known, this implies moment convergence in \eqref{t1}, 
see \eg{} \cite[Theorem 5.5.9]{Gut}.
\end{proof}

\begin{corollary}\label{C2}
Suppose $Z(t)$ is a stochastic process with
regenerative increments over $T_n$ 
such that
$\E[T_1^2 ]$
and $\E[M_1^2]$ are finite,
and $\Var\,[Z(T_1 ) - \frac{\EE{Z(T_1)}}{\EE{T_1}} T_1 ]>0$.
Then
\begin{align}\label{c2}
\frac{Z(t)- \E[Z(t)]}{\sqrt {\Var [Z(t)]}} \dto N (0, 1),  
\qquad \text{as \ttoo}.
\end{align}
\end{corollary}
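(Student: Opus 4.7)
The plan is to apply \refT{Tmom} with $r=2$ to promote the convergence in distribution from \refT{T1} to a standardized form, by converting the deterministic centering $at$ to $\E[Z(t)]$ and the deterministic scaling $\sqrt{t}$ to $\sqrt{\Var[Z(t)]}$. The moment hypotheses of \refC{C2} are exactly those required to invoke \refT{Tmom} with $r=2$, and the strict positivity assumption on $\Var[Z(T_1)-aT_1]$ is precisely $\gss>0$, which will be what makes the standardization legitimate.

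First, I would note that by \refT{Tmom}, the family $\{((Z(t)-at)/\sqrt t)^2:t\ge1\}$ is uniformly integrable. Combined with the convergence in distribution \eqref{t1}, this yields convergence of the first and second moments: $\E[(Z(t)-at)/\sqrt t]\to 0$ and $\E[((Z(t)-at)/\sqrt t)^2]\to \gss/\mu$. The first gives $\E[Z(t)]-at = o(\sqrt t)$, so
\begin{align*}
\frac{Z(t)-\E[Z(t)]}{\sqrt t} = \frac{Z(t)-at}{\sqrt t} - \frac{\E[Z(t)]-at}{\sqrt t} \dto N(0,\gss/\mu)
\end{align*}
by Cram\'er--Slutsky. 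The second, together with the convergence of the first moment, yields $\Var[Z(t)]/t\to \gss/\mu$.

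Next, the hypothesis $\Var[Z(T_1)-aT_1]>0$ is the statement $\gss>0$, so the limit $\gss/\mu$ is strictly positive and $\Var[Z(t)]/t$ is bounded away from $0$ for large $t$. A second application of Cram\'er--Slutsky then gives
\begin{align*}
\frac{Z(t)-\E[Z(t)]}{\sqrt{\Var[Z(t)]}} = \frac{(Z(t)-\E[Z(t)])/\sqrt t}{\sqrt{\Var[Z(t)]/t}} \dto \frac{N(0,\gss/\mu)}{\sqrt{\gss/\mu}} = N(0,1),
\end{align*}
proving \eqref{c2}.

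There is no real obstacle here; the only point that requires the strengthened hypotheses over \refT{T1} is the need for both uniform integrability of squares (to guarantee $\Var[Z(t)]/t$ converges to the right limit rather than merely to an upper bound via Fatou) and the mean convergence that allows the deterministic centering to be switched from $at$ to $\E[Z(t)]$. Both are delivered by \refT{Tmom} with $r=2$.
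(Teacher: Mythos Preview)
Your proof is correct and follows essentially the same approach as the paper: invoke \refT{Tmom} with $r=2$ to obtain the moment asymptotics $\E[Z(t)]=at+o(\sqrt t)$ and $\Var[Z(t)]=(\gss/\mu+o(1))t$, then apply Cram\'er--Slutsky. The only small point the paper makes explicit that you leave implicit is the verification that the hypotheses of \refT{T1} (and hence of \refT{Tmom}) are indeed satisfied: since $|Z(T_1)|\le M_1$ by \eqref{m1}, the finiteness of $\E[M_1^2]$ gives $\E[Z(T_1)^2]<\infty$, which ensures $a$ and $\gss$ are finite.
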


\begin{proof}
Note that 
$\bigabs{Z(T_1)}\le M_1$ by \eqref{m1}, and thus $\E[Z(T_1)^2]<\infty$.
It follows that 
the assumptions  of \refT{T1} hold, and so do the assumptions of \refT{Tmom}
with $r=2$.

Hence, \eqref{t1} holds, with
\begin{align}\label{c3}
\E[Z(t)]& = at + o\bigpar{\sqrt t}  ,
\\\label{c4}
\Var\,[Z(t)]&= \lrpar{\xfrac{\gss}{\mu}+ o(1)}t.
\end{align}
The result \eqref{c2} follows from \eqref{t1} and \eqref{c3}--\eqref{c4} by
Cram\'er--Slutsky's theorem. 
\end{proof}

\section{Law of large numbers}\label{SLLN}

As a complement to the results on asymptotic normality, 
we give both weak and strong laws of large numbers for processes with
regenerative increments. The strong law is given in \cite{Serfozo}, but
repeated here for completeness.

\begin{theorem}
\label{TLLN}
Suppose $Z(t)$ is a stochastic process with
regenerative increments over $T_n$ such that $\mu := \E[T_1 ]$ and
$a:= \E[Z(T_1)]/\mu$
are finite.
In addition, let
$M_1$ be defined by \eqref{m1}.
\begin{romenumerate}
  
\item\label{TLLN1} (Weak LLN.)
If\/ $M_1<\infty $ a.s., then
$Z(t)/t\pto a$ as \ttoo.

\item\label{TLLN2} (Strong LLN {\cite[Theorem 2.54]{Serfozo}}.)
If\/ $\E M_1<\infty $, then
$Z(t)/t\asto a$ as \ttoo.
\end{romenumerate}
\end{theorem}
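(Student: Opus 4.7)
The plan is to decompose $Z(t) = V_{N(t)} + R(t)$, where $V_n$ is as in \eqref{a5}/\eqref{z2} and the remainder $R(t) := Z(t) - V_{N(t)} = Z(t) - Z(T_{N(t)})$ satisfies $|R(t)| \le M_{N(t)+1}$ by \eqref{mn} and \eqref{a4}. The two parts of the theorem then reduce to the same limit statement for $V_{N(t)}/t$, together with two different statements about $M_{N(t)+1}/t$ of varying strength, and this is where the distinction between the weak and strong laws will arise.

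For the main term, the assumption that $a = \E[Z(T_1)]/\mu$ is finite forces $\E|\eta_1| = \E|Z(T_1)|<\infty$, so Kolmogorov's strong law applied to the i.i.d.\ sequence $(\eta_n)$ gives $V_n/n \asto \E \eta_1 = a\mu$. Combined with the elementary renewal theorem $N(t)/t \asto 1/\mu$ and $N(t)\asto\infty$, this yields $V_{N(t)}/t = (V_{N(t)}/N(t))\cdot(N(t)/t) \asto a$ under the hypotheses of both \ref{TLLN1} and \ref{TLLN2}.

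The heart of the matter is the remainder, and this is where I expect the only real subtlety. For the strong law in \ref{TLLN2}, the assumption $\E M_1 < \infty$ makes $\sum_n \P(M_1 > n\eps) < \infty$ for every $\eps > 0$, so Borel--Cantelli yields $M_n/n \asto 0$; since $N(t)+1 \asto \infty$ and $(N(t)+1)/t \asto 1/\mu$, this gives $M_{N(t)+1}/t \asto 0$ and hence $Z(t)/t \asto a$. For the weak law in \ref{TLLN1} we only have $M_1 < \infty$ a.s., which is not enough to run the Borel--Cantelli argument. The key observation is that this is precisely the setting of \refL{L1}: the pairs $(\xi_n, M_n)$ are i.i.d.\ by \refD{D1}, and applying \refL{L1} to $Y_n := M_n$ with $\gd(t) := 1/t$ (which tends to $0$) gives $M_{N(t)+1}/t \pto 0$ without any moment assumption on $M_1$. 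Combined with $V_{N(t)}/t \asto a$ and $|R(t)|/t \le M_{N(t)+1}/t$, this yields $Z(t)/t \pto a$, completing \ref{TLLN1}. The main obstacle is exactly the contrast between these two steps, and it is \refL{L1} that does the work of closing the gap between the a.s.\ finiteness of $M_1$ and the moment control normally needed.
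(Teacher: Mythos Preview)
Your proof is correct and follows essentially the same route as the paper: decompose $Z(t)=V_{N(t)}+R(t)$ with $|R(t)|\le M_{N(t)+1}$, handle the main term by the strong law for $(\eta_n)$ together with $N(t)/t\asto1/\mu$, and then treat the remainder via \refL{L1} for \ref{TLLN1} and via Borel--Cantelli (\ie{} $\E M_1<\infty\Rightarrow M_n/n\asto0$) for \ref{TLLN2}. The only cosmetic differences are that the paper routes the main term through $V_{\tau(t)}$ (citing \cite[Theorem 4.2.1]{Gut:SRW}) before subtracting $\eta_{\tau(t)}$, and that its argument for \ref{TLLN2} is written as a chain of equivalences, which additionally yields the converse noted in \refR{RLLN}.
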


\begin{proof}
By \cite[Theorem 4.2.1]{Gut:SRW}, we have the strong law of large numbers
\begin{align}
  \frac{Z(T_\taut)}{t}
=   \frac{V_\taut}{t}
\asto \frac{\E[Z(T_1)]}{\E[T_1]}=a.
\end{align}
Moreover,
$\taut/t\asto1/\mu$ by 
\cite[Theorem 2.5.1(i)]{Gut:SRW},
and thus 
\cite[Theorem 1.2.3(i)]{Gut:SRW} (with $r=1$)
shows that
\begin{align}
  \frac{\eta_\taut}{t}
\asto 0.
\end{align}
Consequently,
\begin{align}
  \frac{Z(T_\Nt)}{t}
=
  \frac{Z(T_\taut)-\eta_\taut}{t}
\asto a.
\end{align}
Hence, the weak and strong laws in \ref{TLLN1} and \ref{TLLN2} are
equivalent to, respectively,
\begin{align}\label{ve1}
  \frac{Z(t)-Z(T_\Nt)}{t}& \pto 0,
\\\label{ve2}
  \frac{Z(t)-Z(T_\Nt)}{t}& \asto 0.
\end{align}
The proof is completed as follows.

\pfitemref{TLLN1}
We have, by \eqref{mn} and \eqref{a4},
\begin{align}\label{ve3}
  M_{\Nt+1}=\sup_{T_\Nt\le s \le T_\Nti}\bigabs{Z(s)-Z(T_\Nt)}
\ge \bigabs{Z(t)-Z(T_\Nt)}.
\end{align}
\refL{L1} with $Y_n:=M_n$ shows that $M_{\Nti}/t\pto0$, and thus \eqref{ve1}
follows from \eqref{ve3}.

\pfitemref{TLLN2}
Since $\Nt/t\asto 1/\mu>0$
\cite[Theorem 2.5.1(i)]{Gut:SRW},
\eqref{ve2} is equivalent to
\begin{align}\label{uj1}
    \frac{Z(t)-Z(T_\Nt)}{\Nt}& \asto 0,
\qquad \text{as \ttoo,}
\end{align}
and thus to
\begin{align}\label{uj2}
  \sup_{T_n \le t< T_{n+1}}  \frac{\abs{Z(t)-Z(T_n)}}{n}& \asto 0,
\qquad \text{as \ntoo}
.\end{align}
Define
\begin{align}\label{mn'}
  M_n':=\sup_{T_{n-1} \le t< T_n}|Z(t)-Z(T_{n-1})|,
\qquad n\ge1;
\end{align}
\cf{} \eqref{mn} and note that
\begin{align}\label{mn+}
M_n =  M_n'\bmax|Z(T_n)-Z(T_{n-1})|
.\end{align}
We can write \eqref{uj2} as
\begin{align}\label{ve6}
    \frac{M_{n+1}'}{n}& \asto 0,
\qquad \text{as \ntoo}
.\end{align}
Since the sequence \set{M'_n} is \iid, \eqref{ve6} is equivalent to 
\begin{align}
  \label{ve7}
\E M'_1<\infty, 
\end{align}
see \cite[Proposition 6.1.1]{Gut}.
Furthermore, $\E|Z(T_1)|<\infty$ by assumption, and thus \eqref{mn+} shows
that \eqref{ve7} is equivalent to
\begin{align}\label{ve8}
  \E M_1 = \E\sqpar{M'_1\bmax|Z(T_1)|}<\infty.
\end{align}
The chain of equivalences above shows that \eqref{ve2} is equivalent to
\eqref{ve8}. 
\end{proof}

\begin{remark}\label{RLLN}
  The proof shows that, under the assumption that $\E[T_1]$ and $\E[Z(T_1)]$ are
  finite, the strong law of large numbers $Z(t)/t\asto a$ holds if and only if  
$\E[M_1]<\infty$. The gap between the conditions in \ref{TLLN1} and
\ref{TLLN2} is thus not an artefact of the proof.
\end{remark}

  \section{The mean}\label{Smean}
We add also some further results for the mean.
First, we note the estimate \eqref{c3} obtained above
when $T_1$ and $M_1$ have finite second moments
can be improved.
In fact, \cite[Theorem 2.85]{Serfozo} shows the following.
(The assumptions in \cite{Serfozo} are slightly more general.
Also, \cite{Serfozo} states only the non-arithmetic case,
but the arithmetic case is similar.) 
For completeness, we give a proof later.

\begin{theorem}[Essentially {\cite[Theorem 2.85]{Serfozo}}]\label{TE2}
  Suppose $Z(t)$ is a stochastic process with
regenerative increments over $T_n$ 
such that
$\E[T_1^2 ]$,
$\E[M_1]$, 
and\/ $\E[T_1M_1]$ are finite.
(In particular, this holds if\/
$\E[T_1^2 ]$ and\/
$\E[M_1^2]$
are finite.)
Then
\begin{align}\label{teO}
\E[Z(t)]& = at + O(1)
.\end{align}
More precisely,
\begin{romenumerate}
\item 
  If the distribution of $T_1$ is non-arithmetic, then, as \ttoo,
\begin{align}\label{te2}
\E[Z(t)]& = at + a\frac{\E[T_1^2]}{2\mu}
+\frac{1}{\mu}\E\lrsqpar{\int_0^{T_1}Z(s)\dd s - T_1 Z(T_1)}
+o(1).
\end{align}

\item 
If the distribution of $T_1$ is arithmetic with span $d$, then
\begin{align}\label{ted}
\E[Z(t)]& 
= at + a\frac{\E[T_1^2]}{2\mu}
+\frac{ad}{2}
+\frac{1}{\mu}\E\lrsqpar{{d}\sum_{k=1}^{T_1/d-1}Z(kd) - T_1 Z(T_1)}
+o(1)
\notag\\&
= at + a\frac{\E[T_1^2]}{2\mu}
-\frac{ad}{2}
+\frac{1}{\mu}\E\lrsqpar{{d}\sum_{k=1}^{T_1/d}Z(kd) - T_1 Z(T_1)}
+o(1)
.\end{align}
as \ttoo{} with $t\in d\bbN$.

\end{romenumerate}
\end{theorem}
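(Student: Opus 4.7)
The natural approach is via the classical renewal equation. Conditioning on $T_1$ and exploiting the regenerative property (so that on $\{T_1\le t\}$ the process $Z(t)-Z(T_1)$ is an independent copy of $Z(t-T_1)$), I would obtain $g(t) = h(t) + (F * g)(t)$ for $g(t):=\E[Z(t)]$, with forcing $h(t):=\E[Z(t)\indic{T_1>t}]+\E[Z(T_1)\indic{T_1\le t}]$ and $F$ the distribution function of $T_1$. Writing $\tilde Z(t):=Z(t)-at$ and $\tilde g(t):=g(t)-at$, a short calculation (using $a\int_0^t(t-s)\dd F(s) = at - a\int_0^t \P(T_1>u)\dd u$) transforms this into a renewal equation for $\tilde g$ with the same kernel $F$ but new forcing
\begin{align*}
\tilde h(t)=\E\bigpar{\tilde Z(t)\indic{T_1>t}}+\E\bigpar{\tilde Z(T_1)\indic{T_1\le t}}
=\E\bigpar{\bigpar{\tilde Z(t)-\tilde Z(T_1)}\indic{T_1>t}},
\end{align*}
where the second equality uses $\E[\tilde Z(T_1)]=\E[Z(T_1)]-a\mu=0$.

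The unique bounded solution is $\tilde g=U*\tilde h$, so the problem reduces to an application of the key renewal theorem to $\tilde h$. Using $|Z(s)|\le M_1$ for $s\in[0,T_1]$, I would bound
\begin{align*}
|\tilde h(t)|\le H(t):=\E\bigsqpar{(2M_1+|a|T_1)\indic{T_1>t}},
\end{align*}
which is decreasing with $\int_0^\infty H(s)\dd s=2\E[T_1M_1]+|a|\E[T_1^2]<\infty$ under the stated moment conditions, hence directly Riemann integrable. Since $Z$ is \cadlag, so is $\tilde h$, whence $\tilde h$ is continuous Lebesgue-almost everywhere, and the domination by $H$ upgrades this to direct Riemann integrability of $\tilde h$ itself. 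Since $U*H$ is bounded (it converges as \ttoo{} by the key renewal theorem applied to $H$), so is $\tilde g$, which already yields \eqref{teO}.

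For the sharper expansion \eqref{te2}, the non-arithmetic key renewal theorem gives $\tilde g(t)\to \mu^{-1}\int_0^\infty \tilde h(s)\dd s$ as \ttoo, and I would compute this limit by swapping expectation and integration,
\begin{align*}
\int_0^\infty \tilde h(s)\dd s = \E\lrsqpar{\int_0^{T_1}\bigpar{\tilde Z(s)-\tilde Z(T_1)}\dd s} = \E\lrsqpar{\int_0^{T_1}Z(s)\dd s -T_1Z(T_1)}+\frac{a}{2}\E[T_1^2],
\end{align*}
using $\tilde Z(s)=Z(s)-as$ and $\int_0^{T_1}s\dd s=T_1^2/2$; dividing by $\mu$ yields \eqref{te2}. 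In the arithmetic case with span $d$, I would instead invoke the lattice version of the key renewal theorem, obtaining $\tilde g(nd)\to \frac{d}{\mu}\sum_{k=0}^\infty \tilde h(kd)$ as \ntoo, and an analogous discrete computation (using $Z(0)=0$, $T_1\in d\bbN$, and $\sum_{k=0}^{T_1/d-1}k=(T_1/d)(T_1/d-1)/2$) reproduces the two equivalent expressions in \eqref{ted}; the alternative sign in front of $ad/2$ corresponds to whether the terminal term $Z(T_1)$ is included in the sum.

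The main technical obstacle is verifying direct Riemann integrability of $\tilde h$: mere integrability is insufficient. One needs both the monotone dominating bound $H$---where the assumption $\E[T_1M_1]<\infty$ enters essentially---and the a.e.-continuity of $\tilde h$ inherited from the \cadlag{} assumption on $Z$. Once dRi is secured, everything else is bookkeeping inside the expectation.
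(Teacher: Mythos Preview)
Your approach is correct and essentially the same as the paper's: both arrive at the identity $\E[Z(t)]-at = (U*\tilde h)(t)$ with $\tilde h(s)=\E\bigl[(\tilde Z(s)-\tilde Z(T_1))\indic{T_1>s}\bigr]$, verify direct Riemann integrability of $\tilde h$ by dominating it with a decreasing integrable function (using $\E[T_1M_1]<\infty$) together with the \cadlag-induced a.e.\ continuity, and then apply the key renewal theorem and compute the limiting integral (respectively sum). The only packaging difference is that the paper first reduces to $a=0$ and obtains the convolution identity directly from the stopping-time decomposition $\E[Z(t)]=\E[Z(t)-Z(T_{\tau(t)})]$ via Wald's equation, whereas you obtain it by setting up and solving a renewal equation; these are two standard routes to the same formula.

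One small point worth tightening: when you write ``the unique bounded solution is $\tilde g=U*\tilde h$'', this is slightly circular, since boundedness of $\tilde g$ is precisely what you are about to conclude. The correct invocation is that the renewal equation has a unique \emph{locally bounded} solution, namely $U*\tilde h$; local boundedness of $\tilde g$ is immediate from $|Z(t)|\le\sum_{i=1}^{\tau(t)}M_i$ and Wald's equation, which gives $\E|Z(t)|\le\E[\tau(t)]\,\E[M_1]<\infty$ for each $t$.
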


\begin{remark}\label{RE2}
  In the special case $Z(t):=V_{\Nt}$ for some sequence $V_n$ as in
  \eqref{a5},
\ie, in the case $Z(t)=Z(T_n)$ for $T_{n}\le t< T_{n+1}$,
\eqref{teO} is a special case of 
\cite[Theorem 4.2.4(i) with Remark 4.2.10]{Gut:SRW},
and the proof given there shows also \eqref{te2} and \eqref{ted}.

The even more special case $Z(t):=N(t)$ is classical, see
\cite[Theorem 2.5.2]{Gut:SRW} and \cite[Proposition 2.84]{Serfozo}.
\end{remark}

Under weaker moment assumptions
(where asymptotic normality  does not necessarily hold),
we have the following results. (Proofs are given below.)
First, we assume only finite expectations.

\begin{theorem}\label{TE1}
  Suppose $Z(t)$ is a stochastic process with
regenerative increments over $T_n$ such that $\E[T_1 ]$
and $\E[M_1]$ are finite.
Then
\begin{align}\label{te1}
\E\sqpar{Z(t)}=at+o(t),
\qquad \text{as \ttoo}.
\end{align}
\end{theorem}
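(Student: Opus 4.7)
The plan is to decompose $Z(t)=V_\Nt+\bigpar{Z(t)-Z(T_\Nt)}$ via \eqref{z3}, with the remainder bounded in absolute value by $M_\Nti$. It then suffices to prove the two estimates $\E V_\Nt=at+o(t)$ and $\E M_\Nti=o(t)$.

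For the first, I would note that $\taut$ is a stopping time for the filtration generated by the \iid{} pairs $(\xi_i,\eta_i)$ (since $\set{\taut\le n}=\set{T_n>t}$), that $\E\taut=U(t)<\infty$ by \eqref{majU}, and that $\E|\eta_1|\le\E M_1<\infty$ because $|\eta_n|\le M_n$ by \eqref{eta} and \eqref{mn}. Wald's identity then gives $\E V_\taut=\E\eta_1\cdot\E\taut=a\mu\,U(t)$, and the elementary renewal theorem (see \cite[Theorem 2.5.2]{Gut:SRW}) yields $U(t)/t\to 1/\mu$, so $\E V_\taut=at+o(t)$. Since $V_\Nt=V_\taut-\eta_\taut$ with $|\eta_\taut|\le M_\taut$, everything is reduced to showing $\E M_\Nti=o(t)$.

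The main obstacle is this moment estimate. \refL{L1} applied with $Y_n:=M_n$ already gives $M_\Nti/t\pto 0$, but promoting this to convergence of expectations requires a uniform integrability bound that is not automatic from $\E M_1<\infty$. I would handle it by truncation: for any $K>0$, mimicking the derivation of \eqref{maj1}--\eqref{maj2} but carrying the factor $M_{n+1}$ through, the independence of $(\xi_{n+1},M_{n+1})$ from $T_n$ gives
\begin{align*}
\E\bigsqpar{M_\Nti\indic{M_\Nti>K}}
=\sumno\E\bigsqpar{g_K(t-T_n)\indic{T_n\le t}},
\end{align*}
where $g_K(s):=\indic{s\ge 0}\,\E\bigsqpar{M_1\indic{M_1>K,\,\xi_1>s}}$ is decreasing in $s$ with $g_K(0)=\E\bigsqpar{M_1\indic{M_1>K}}$. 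Bounding $g_K(t-T_n)\le g_K(0)$ and summing yields
\begin{align*}
\E\bigsqpar{M_\Nti\indic{M_\Nti>K}}\le \E\bigsqpar{M_1\indic{M_1>K}}\cdot U(t),
\end{align*}
while trivially $\E\bigsqpar{M_\Nti\indic{M_\Nti\le K}}\le K$. Dividing by $t$, using $U(t)/t\to 1/\mu$, and letting $K\to\infty$ (so that $\E\bigsqpar{M_1\indic{M_1>K}}\to 0$ by dominated convergence, since $\E M_1<\infty$) gives $\E M_\Nti/t\to 0$. Combined with $\E V_\Nt=\E V_\taut-\E\eta_\taut=at+o(t)$ and $|\E Z(t)-\E V_\Nt|\le\E M_\Nti=o(t)$, this proves \eqref{te1}.
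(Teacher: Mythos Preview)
Your proof is correct and follows the same overall strategy as the paper: apply Wald's identity to $V_{\tau(t)}$ and then show that the residual term is controlled by $\E M_{\tau(t)}=o(t)$. The differences are in the details. The paper first reduces to the case $a=0$ by passing to $\hZ(t)=Z(t)-at$, so that Wald gives $\E[Z(T_{\tau(t)})]=0$ exactly and the elementary renewal theorem is not needed; you instead keep $a$ general and use $U(t)/t\to1/\mu$ to turn $a\mu\,U(t)$ into $at+o(t)$, which is equally valid. For the key estimate $\E M_{\tau(t)}=o(t)$, the paper invokes \cite[Theorem~1.8.1]{Gut:SRW} (using that $\tau(t)$ are stopping times with $\tau(t)/t\to1/\mu$ a.s.\ and $\set{\tau(t)/t:t\ge1}$ uniformly integrable), whereas you give a direct truncation argument yielding the clean inequality $\E M_{\tau(t)}\le K+U(t)\,\E\bigsqpar{M_1\indic{M_1>K}}$. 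Your argument is more self-contained and elementary; the paper's citation hides a somewhat heavier machinery but places the result in a broader framework for stopped sequences.
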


More generally, we have the following theorem that ``interpolates'' between
\refTs{TE2} and \ref{TE1}. Note that the case $r=1$ is \refT{TE1} 
(which we have stated separately for emphasis), 
and that \refT{TE2} is a substitute for the 
excluded case $r=0$.

\begin{theorem}\label{TEr}
  Suppose $Z(t)$ is a stochastic process with
regenerative increments over $T_n$.
Let $0<r\le1$.
\begin{romenumerate}  
\item \label{TEr1}
If\/ $\E[T_1^{2-r}]$,
$\E[M_1]$, and $\E[T_1^{1-r}M_1]$ are finite, then 
\begin{align}\label{ter}
\E\sqpar{Z(t)}=at+o(t^r),
\qquad \text{as \ttoo}.
\end{align}
\item \label{TEr2}
In particular,
\eqref{ter} holds if there exist $p\ge 2-r$ and $q\ge1$
such that $\E[T_1^p]$ and $\E[M_1^q]$ are finite, and
\begin{align}\label{pqr}
  p\Bigpar{1-\frac{1}{q}}\ge 1-r.
\end{align}
\end{romenumerate}
\end{theorem}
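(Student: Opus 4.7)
My plan is to parallel the proof of \refT{T1}, using the decomposition $Z(t) = V_\taut - \eta_\taut + (Z(t) - Z(T_\Nt))$ and applying Wald's identity at the stopping time $\taut$ to obtain $\E V_\taut = a\mu U(t)$ and $\mu U(t) - t = \E R(t)$, where $R(t) := T_\taut - t$ is the overshoot. This yields $\E Z(t) - at = a\E R(t) - \E\eta_\taut + \E[Z(t) - Z(T_\Nt)]$. By \eqref{mn}, both $\abs{\eta_\taut}$ and $\abs{Z(t) - Z(T_\Nt)}$ are bounded by $M_\taut$, so it suffices to show that $\E R(t)$ and $\E M_\taut$ are each $o(t^r)$.

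By the conditioning computation used in the proof of \refL{L1}, each of these expectations is a renewal convolution $(U*f)(t) := \int_0^t f(t-u)\dd U(u)$: for $\E M_\taut$, $f(s) = \psi(s) := \E\bigsqpar{M_1\indic{T_1 > s}}$; for $\E R(t)$, $f(s) = q(s) := \E\bigsqpar{(T_1 - s)_+} = \int_s^\infty \P(T_1 > v)\dd v$. Both $\psi$ and $q$ are nonnegative, decreasing, and vanish at infinity (from $\E M_1, \E T_1 < \infty$). A Fubini computation gives, for $0 < r < 1$, $\intoo s^{-r}\psi(s)\dd s = \E[T_1^{1-r}M_1]/(1-r)$ and $\intoo s^{-r}q(s)\dd s = \E[T_1^{2-r}]/[(1-r)(2-r)]$, both finite under the hypotheses of part \ref{TEr1}.

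The key technical step is the following lemma, which I would state and prove separately: \emph{let $f:\ooo\to\ooo$ be decreasing with $f(s)\to 0$; if either $0 < r < 1$ with $\intoo s^{-r}f(s)\dd s < \infty$, or $r = 1$, then $(U*f)(t) = o(t^r)$ as $\ttoo$.} I would prove this by splitting at $u = t/2$. On $[0, t/2]$, the estimate $f(t-u) \le f(t/2)$ gives $\int_0^{t/2} f(t-u)\dd U(u) \le f(t/2)\,U(t/2) = o(t^r)$; here the integrability hypothesis together with $f$ decreasing forces $f(s) = o(s^{r-1})$ for $r<1$ (and trivially $f(s) = o(1)$ for $r=1$), while $U(t/2) = O(t)$ by the elementary renewal theorem. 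On $[t/2, t]$, breaking into unit intervals and using the Blackwell-type inequality $U(a+h) - U(a) \le U(h) \le C(h+1)$ (which follows by conditioning on the first renewal after $a$), together with $f(k) \le \int_{k-1}^k f(s)\dd s$ for $k \ge 1$, yields $\int_{t/2}^t f(t-u)\dd U(u) \le C\bigpar{f(0) + \int_0^{t/2}f(s)\dd s}$; for $r<1$ the pointwise bound $f(s) \le \eps s^{r-1}$ for $s$ large gives $\int_0^{t/2}f(s)\dd s \le O_\eps(1) + O(\eps t^r)$, whence letting $\eps\downarrow 0$ shows this part is $o(t^r)$, and for $r=1$ Ces\`aro gives $\int_0^{t/2}f(s)\dd s = o(t)$.

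Applying the lemma to $f = \psi$ and $f = q$ completes part \ref{TEr1}. Part \ref{TEr2} then follows from \ref{TEr1} via H\"older's inequality: with conjugate exponents $p' = q/(q-1)$ and $q$, $\E[T_1^{1-r}M_1] \le \bigpar{\E[T_1^{(1-r)p'}]}^{1/p'}\bigpar{\E[M_1^q]}^{1/q}$, and the hypothesis $p(1-1/q) \ge 1-r$ is precisely $(1-r)p' \le p$; the other two required moments $\E[T_1^{2-r}]$ and $\E M_1$ are immediate from $p \ge 2-r$ and $q \ge 1$. The main obstacle I expect is the uniform treatment of the convolution estimate across $0 < r \le 1$, since the $r=1$ case requires only $f \to 0$ while $0 < r < 1$ requires $\int s^{-r}f(s)\dd s < \infty$; but once the split at $t/2$ and the Blackwell bound are in hand, both cases follow by essentially the same calculation.
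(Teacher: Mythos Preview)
Your proof is correct and follows a genuinely different route from the paper's at the technical core.

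Both arguments begin the same way for part~\ref{TEr2}, deducing it from part~\ref{TEr1} via H\"older's inequality (you should note, as the paper does, that $q=1$ forces $r=1$ by \eqref{pqr}, so the conjugate-exponent argument applies only for $q>1$, with the boundary case handled trivially). For part~\ref{TEr1}, the paper first reduces to $a=0$ by replacing $Z(t)$ with $Z(t)-at$, so that only the single estimate $\E M_{\tau(t)}=o(t^r)$ is needed; you instead keep $a$ general and handle the overshoot term $\E R(t)$ separately, but this costs nothing since $\E R(t)=o(t^r)$ uses exactly the assumption $\E[T_1^{2-r}]<\infty$ that the paper's reduction also consumes (via $\hM_1\le M_1+|a|T_1$).

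The real divergence is in the convolution lemma. The paper proves $\E Y_{\tau(t)}=o(t^r)$ by multiplying $h(s)=\E[Y_1\indic{T_1>s}]$ by the bounded factor $s^{-r}\wedge\eps$, observing that the product is directly Riemann integrable, applying the key renewal theorem to obtain a limit $\lambda_{r,\eps}$, and then letting $\eps\to 0$ via dominated convergence. Your approach is more elementary: you split the convolution at $t/2$, use the subadditivity bound $U(a+h)-U(a)\le U(h)$ on the near part, and deduce the pointwise decay $f(s)=o(s^{r-1})$ from the weighted integrability to control the far part. Your method avoids the key renewal theorem entirely and works uniformly for nonarithmetic and arithmetic $T_1$ without a case distinction; the paper's method is shorter once the key renewal theorem is taken as given, and gives the sharper information that $t^{-r}\E Y_{\tau(t)}$ has an explicit $\limsup$ bound for each $\eps$.
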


\begin{remark}\label{RE1}
  In the special case $Z(t):=V_{\Nt}$ as in \refR{RE2},
\eqref{te1} follows from \cite[Theorem 4.2.1 with Remark 4.2.10]{Gut:SRW}.
Moreover, in this case and for $r\in(\frac12,1)$, 
under somewhat stronger moment assumptions,
\eqref{ter} follows from \cite[Theorem 4.2.2 with Remark 4.2.10]{Gut:SRW}.
These theorems in \cite{Gut:SRW} yield also
estimates for higher moments of $Z(t)-at$; we leave it to the reader to
extend those results to general processes with regenerative increments.

The special case $Z(t)=N(t)$ of \eqref{te1} is the elementary renewal
theorem. 
Furthermore, in this special case, \eqref{ter}  was proved in \cite{Tacklind}.
\end{remark}

\begin{proof}[Proof of \refT{TE1}]
Consider $\hZ(t):=Z(t)-at$,
which also is a process with regenerative increments over $T_n$;
we have $\E[\hZ(T_1)]=\E[Z(T_1)]-a\E[T_1]=0$ 
and 
\begin{align}\label{hM}
\hM_1:=\sup_{0 \le t\le T_1}|\hZ(t)| \le M_1 + |a| T_1.
\end{align}
Hence, by replacing $Z(t)$ with $\hZ(t)$, it follows that
we may without loss of generality assume
$\E[Z(T_1)]=0$ and thus $a=0$.
(We could have done so also in earlier proofs, but we preferred to stay
close to \cite{Serfozo}.)

Thus assume $a=0$. Recall that $\tau(t)$ is a stopping time, and note that
$\E[\tau(t)]<\infty$ for every $t\ge0$, 
see \eg{} \cite[Theorem 2.3.1(ii) or Theorem 2.4.1]{Gut:SRW}.
Hence Wald's equation \cite[Theorem 1.5.3(i)]{Gut:SRW},
\cite[Proposition 2.53]{Serfozo} applies
to $Z(T_\taut)=V_\taut$, which yields
\begin{align}\label{e1}
  \E \bigsqpar{Z(T_\taut)} 
= \E[\tau(t)]\cdot\E[Z(T_1)]=0
.\end{align}
Furthermore, it follows from \eqref{a4}, \eqref{mn}, and $\tau(t)=\Nt+1$ that
\begin{align}\label{e2}
  \bigabs{Z(t)-Z(T_\taut)} 
\le   \bigabs{Z(t)-Z(T_\Nt)} +   \bigabs{Z(T_\taut)-Z(T_\Nt)} 
\le 2 M_\taut.
\end{align}
Consequently,
\begin{align}\label{e3}
 \bigabs{ \E[Z(t)]}
=  \bigabs{ \E\bigsqpar{Z(t)-Z(T_\taut)}} 
\le \E \bigabs{Z(t)-Z(T_\taut)} 
\le 2\E{M_\taut}. 
\end{align}
Moreover, $\taut$ are stopping times, with $\taut/t\to1/\mu$ \as{} as \ttoo,
and the random variables \set{\taut/t:t\ge1} are \ui{}, see
\cite[Theorem 2.5.1 and (2.5.6) (or Theorem 3.7.1)]{Gut:SRW}.
Hence, \cite[Theorem 1.8.1]{Gut:SRW} shows that the assumption
$\E[M_1]<\infty$ implies
\begin{align}\label{e4}
  \E\bigsqpar{M_\taut} = o(t),
\end{align}
and the result follows from \eqref{e3}
\end{proof}

\begin{proof}[Proof of \refT{TEr}]
We note first that \ref{TEr2} follows from \ref{TEr1} and \Holder's
inequality. In fact, suppose that the assumptions of \ref{TEr2} hold.
If $q>1$, let $q'$ be the conjugate exponent defined by $1/q'=1-1/q$;
then
\begin{align}
  \E\bigsqpar{T_1^{1-r}M_1}
\le \E\bigsqpar{T_1^{q'(1-r)}}^{1/q'}\E\bigsqpar{M_1^{q}}^{1/q}<\infty,
\end{align}
since \eqref{pqr} says $p/q'\ge1-r$ and thus $p\ge q'(1-r)$.
Hence, the assumptions of \ref{TEr1} hold.
The case $q=1$ occurs by \eqref{pqr} only for $r=1$, and then the result
immediately follows from \ref{TEr1}.

It thus suffices to prove \ref{TEr1}.
As in the proof of \refT{TE1}, we may assume $a=0$. Then \eqref{e3} holds,
and the result follows from the following lemma.
\end{proof}

\begin{lemma}\label{LEr}
As in \refL{L1},
let $T_n$ be a sequence of renewal times 
and let $Y_n$, $n\ge1$, be another sequence of random variables such that
the random vectors $(\xi_n,Y_n)$ are i.i.d.
Let $0<r\le 1$ and  
assume that
$\E [T_1]$, $\E [Y_1]$ and $\E[T_1^{1-r}Y_1]$ are finite.
Then 
  \begin{align}\label{ler}
\E Y_{\taut} = o(t^r)
\qquad \text{as }\ttoo 
. \end{align}  
\end{lemma}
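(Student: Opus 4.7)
The plan is to rewrite $\E Y_{\tau(t)}$ as an expectation against the renewal measure involving only the single pair $(\xi_1,Y_1)$, bound the resulting kernel uniformly by something of the form $C(1+\xi_1\wedge t)$, and then interpolate between small and large $\xi_1$ using the joint moment hypothesis. I would first reduce to $Y_n\ge0$ by replacing $Y_n$ with $|Y_n|$ (this leaves the hypotheses and the $o(t^r)$ conclusion intact). Conditioning on $T_n$ and using independence of $T_n$ and $(\xi_{n+1},Y_{n+1})$, exactly as in \eqref{maj1}, followed by a second application of Fubini would yield
\begin{align}\label{plan-fub}
\E Y_{\tau(t)}=\int_0^t g(t-u)\dd U(u)=\E\bigsqpar{Y_1\,\bigpar{U(t)-U(t-\xi_1)}},
\end{align}
where $g(s):=\E[Y_1\indic{\xi_1>s}]$ and with the convention $U(s):=0$ for $s<0$.

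The crux is the uniform bound
\begin{align}\label{plan-Ubd}
U(t)-U(t-\xi)\le U(\xi\wedge t)\le C(1+\xi\wedge t),\qquad t,\xi\ge0.
\end{align}
The second inequality follows from the elementary renewal theorem $U(s)/s\to 1/\mu$ (\cite[Theorem 2.4.1]{Gut:SRW}) together with monotonicity of $U$. For the first: if $\xi\le t$, I would condition on the residual life $R_{t-\xi}=T_{\tau(t-\xi)}-(t-\xi)$ and use the regenerative structure to obtain
\begin{align*}
U(t)-U(t-\xi)=\E\bigsqpar{U(\xi-R_{t-\xi})\indic{R_{t-\xi}\le\xi}}\le U(\xi);
\end{align*}
for $\xi>t$ one has $U(t-\xi)=0$ and $\xi\wedge t=t$, so the inequality is trivial. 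Plugging \eqref{plan-Ubd} into \eqref{plan-fub} would reduce the claim to showing
\begin{align*}
\E\bigsqpar{Y_1(\xi_1\wedge t)}=\E\bigsqpar{Y_1\xi_1\indic{\xi_1\le t}}+t\,\E\bigsqpar{Y_1\indic{\xi_1>t}}=o(t^r).
\end{align*}

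Set $\psi(s):=\E[Y_1\xi_1^{1-r}\indic{\xi_1>s}]$, which tends to $0$ as $s\to\infty$ by dominated convergence (using $\E[T_1^{1-r}Y_1]<\infty$). For the second summand the Markov-type bound $\indic{\xi_1>t}\le(\xi_1/t)^{1-r}$ immediately gives $t\E[Y_1\indic{\xi_1>t}]\le t^r\psi(t)=o(t^r)$. For the first summand I would truncate at an auxiliary level $A$ and use $\xi_1^r\le t^r$ on $\set{\xi_1\le t}$ to obtain $\E[Y_1\xi_1\indic{\xi_1\le t}]\le A\E Y_1+t^r\psi(A)$; letting $t\to\infty$ and then $A\to\infty$ gives the required $o(t^r)$.

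The main obstacle is the uniform bound \eqref{plan-Ubd}: without the refinement $\xi\wedge t$ (as opposed to just $t$) the representation \eqref{plan-fub} would only give $\E Y_{\tau(t)}\le C(1+t)\E Y_1=O(t)$, and the interpolation that exploits $\E[T_1^{1-r}Y_1]<\infty$ for $r<1$ would collapse.
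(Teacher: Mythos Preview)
Your argument is correct and takes a genuinely different route from the paper. The paper separates off the case $r=1$ by quoting \cite[Theorem 1.8.1]{Gut:SRW}, and for $0<r<1$ introduces the truncated kernel $h_{r,\eps}(s):=(s^{-r}\bmin\eps)h(s)$, which is decreasing and directly Riemann integrable with $\int_0^\infty h_{r,\eps}(s)\,ds\le(1-r)^{-1}\E[Y_1T_1^{1-r}]<\infty$; the key renewal theorem then gives $\limsup_{t\to\infty}t^{-r}\E Y_{\tau(t)}\le\mu^{-1}\int_0^\infty h_{r,\eps}(s)\,ds$, and one lets $\eps\to0$. Your approach instead performs a second Fubini to obtain $\E Y_{\tau(t)}=\E[Y_1(U(t)-U(t-\xi_1))]$, bounds this by $C\,\E[Y_1(1+\xi_1\bmin t)]$ via the subadditivity estimate $U(t)-U(t-\xi)\le U(\xi\bmin t)$ and the linear growth of $U$, and finishes with an elementary truncation using only $\E Y_1<\infty$ and $\E[Y_1\xi_1^{1-r}]<\infty$.

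The trade-offs: your argument is more elementary, needing only subadditivity of $U$ and the elementary renewal theorem rather than the key renewal theorem, and it handles all $0<r\le1$ uniformly without a separate case $r=1$. The paper's approach is more consonant with the surrounding proofs (\refL{L1} and \refT{TE2} also hinge on direct Riemann integrability and the key renewal theorem), and the limit \eqref{ma6} is exact before letting $\eps\to0$, which in principle could yield sharper constants. Your justification of $U(t)-U(t-\xi)\le U(\xi)$ via conditioning on the residual life is correct; this is essentially the standard subadditivity of the renewal function.
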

\begin{proof}
  The case $r=1$ follows by \cite[Theorem 1.8.1]{Gut:SRW} as for \eqref{e4}
in the proof of \refT{TE1}. Hence, we may assume $0<r<1$.
We may also assume that $Y_n\ge0$, by otherwise replacing $Y_n$ by $|Y_n|$.

With these assumptions, we argue similarly to the proof of \refL{L1}.
We have
\begin{align}\label{ma1}
  \E \bigsqpar{Y_{\taut}}
&
=\E \sumno \indic{T_n\le t<T_{n+1}}\cdot Y_{n+1}
\notag\\&
=\E \sumno \E\bigsqpar{Y_{n+1}\cdot\indic{T_n\le t<T_{n+1}}\mid T_n}
\notag\\&
=\E \sumno \E\bigsqpar{Y_{n+1}\cdot\indic{0 \le t-T_n< \xi_{n+1}} \mid T_n}
\notag\\&
=\E \sumno h(t-T_n),
\end{align}
where now
\begin{align}\label{ma2}
  h(s):=\indic{s\ge0}\E\bigsqpar{Y_1\indic{\xi_1 > s}}
=\indic{s\ge0}\E\bigsqpar{Y_1\indic{T_1 > s}}
.\end{align}
Define, for $r,\eps>0$,
\begin{align}\label{ma3}
  h\re(s):=(s^{-r}\land\eps)h(s)
.\end{align}
If $t\ge t_\eps:=\eps^{-1/r}$, then $t^{-r}\le s^{-r}\land\eps$ for every
$s\in[0,t]$, and thus \eqref{ma1} implies
\begin{align}\label{ma4}
t^{-r} \E \bigsqpar{Y_{\taut}}&
=
\E \sumno t^{-r}h(t-T_n)
\le \E \sumno h\re(t-T_n)
\notag\\&
=\intoo h\re(t-u)\dd U(u).
\end{align}
By \eqref{ma2}--\eqref{ma3}, $h\re(s)$ is  decreasing on $\ooo$ with,
using Fubini's theorem, 
\begin{align}\label{ma5}
  \intoo h\re(s)\dd s&
\le\intoo s^{-r}h(s)\dd s
=\intoo \E \bigsqpar{Y_1\indic{T_1>s} s^{-r}}\dd s
\notag\\&
= \E\intoo {Y_1 \indic{T_1>s} s^{-r}}\dd s
=\frac{1}{1-r}\E\bigsqpar{Y_1T_1^{1-r}}<\infty.
\end{align}
Hence, $h\re(s)$ is directly Riemann integrable, and thus \eqref{ma4} and 
the key renewal theorem 
yield, 
in the non-arithmetic case,
for every $\eps>0$,
\begin{align}\label{ma6}
\limsup_{\ttoo} t^{-r} \E \bigsqpar{Y_{\taut}}&
\le \lim_\ttoo \intoo h\re(t-u)\dd U(u)
=
\frac{1}{\mu}\intoo h\re(s)\dd s
=:\gl\re.
\end{align}
Moreover, $h\re(s)\to0$ as $\eps\to0$ for every fixed $s$ by \eqref{ma3},
and the inequality $h\re(s)\le s^{-r}h(s)$ together with \eqref{ma5} 
allows us to use the dominated convergence theorem and conclude that
\begin{align}
\lim_{\eps\to0}\gl\re
=  \frac{1}{\mu}\lim_{\eps\to0}\intoo h\re(s)\dd s
=0.
\end{align}
Hence, \eqref{ler} follows from \eqref{ma6}.

The case when $T_1$ is arithmetic is similar; 
if $d$ is the span of $T_1$, then
it suffices to consider $n\in d\bbN$, and
we then have \eqref{ma6} with
\begin{align}
  \gl\re:=\frac{d}{\mu}\sumno h\re(nd).
\end{align}
Again, $\gl\re\to0$ as $\eps\to0$ by dominated convergence, and \eqref{ler}
follows.
\end{proof}

The following example shows that the condition \eqref{pqr} is best possible.
\begin{example}
Let $\ga>1$ and $\gb>0$.
Let the renewal times
$T_n$ be given by \eqref{a1} where 
$\xi_i$ are \iid{} and have 
the Pareto distribution with  $\P[\xi_n>t]=t^{-\ga}$ for $t\ge1$.
(Thus $\E T_1=\E\xi_1<\infty$, since $\ga>1$.)
Let $Z(t)$ be the process
\begin{align}
  Z(t):=\xi_\taut^\gb\indic{t\ge T_\Nt+1}. 
\end{align}
Recall \eqref{a4}, and note that $Z(t)$ is \cadlag{} and that $Z(T_n)=0$ for
all $n$;  consequently $\E[Z(T_1)]=0$ and $a=0$.
Then, for any $t\ge 1$,
\begin{align}
  \P\sqpar{Z(t)>t^\gb}&
= \sumno \P\bigsqpar{T_{n}+1\le t <T_{n+1},\; \xi_{n+1}>t}
\notag\\&
= \sumno \P\sqpar{T_{n}+1\le t}\P\sqpar{\xi_{n+1}>t}
\notag\\&
= \sumno \P\sqpar{T_{n}\le t-1}t^{-\ga}
\notag\\&
= U(t-1)t^{-\ga}
.\end{align}
Since $U(t)/t\to 1/\mu>0$ as \ttoo, and $U(t)\ge U(0)=1$,
we have $U(t-1)>ct$ for 
some $c>0$ and all $t\ge1$. Hence, for $t\ge1$,
\begin{align}
    \P\sqpar{Z(t)>t^\gb}>c t^{1-\ga}
\end{align}
and thus
\begin{align}
    \E\sqpar{Z(t)}>c t^{1+\gb-\ga}.
\end{align}
Consequently, 
\eqref{te1} does \emph{not} hold for $r:=1+\gb-\ga$.

If we are given $p,q\ge1$ and $0<r<1$ such that \eqref{pqr} does not hold, \ie,
\begin{align}\label{za1}
    p\Bigpar{1-\frac{1}{q}}< 1-r,
\end{align}
choose $\gb:=p/q$ and $\ga:=\gb+1-r$.
Note that $\ga>1$ since \eqref{za1} yields
\begin{align}
  p-\gb < 1-r
\end{align}
and thus 
\begin{align}
  \ga > p \ge1.
\end{align}
This also implies $\E[T_1^p]<\infty$.
Furthermore, $M_1\le \xi_1^\gb=T_1^\gb$ and thus
$\E[M_1^q]=\E[T_1^p]<\infty$.
We have seen that \eqref{te1} does not hold.
\end{example}

Finally, we give a proof of \refT{TE2}, since the statement and 
proof in \cite{Serfozo} do not explicitly include the arithmetic case.
(We find it illustrative to include both cases in our proof. The ideas are
similar to the proof in \cite{Serfozo}, although the details differ.)

\begin{proof}[Proof of \refT{TE2}]
 First, the two expressions in \eqref{ted} are equal, since 
their difference is
$ad-\frac{1}\mu \E\sqpar{d Z(T_1)}=ad-da=0$.

Next, 
note that in the case $Z(t)=at$, simple calculations show that \eqref{te2}
and \eqref{ted} hold (without the remainder term).
Hence, we may again replace $Z(t)$ by $Z(t)-at$ and thus assume that $a=0$.

We then have $\E[Z(T_\taut)]=0$ by \eqref{e1}. We argue as in
\eqref{ma1} and obtain, 
recalling that $\zeta_{n+1}$ in \eqref{z1} is independent of $T_n$
and noting that absolute convergence holds by \eqref{e2} and
\eqref{ma1} (with $Y_n:=M_n$),
\begin{align}\label{mb1}
  \E \sqpar{Z(t)}&
=   \E \bigsqpar{Z(t)-Z(T_{\taut})}
=\E \sumno \indic{T_n\le t<T_{n+1}}\cdot \bigpar{Z(t)-Z(T_{n+1})}
\notag\\&
=\E \sumno \E\bigsqpar{\bigpar{Z(t)-Z(T_{n+1})}\cdot\indic{T_n\le t<T_{n+1}}\mid T_n}
\notag\\&
=\E \sumno \E\bigsqpar{\bigpar{Z(t)-Z(T_{n+1})}\cdot\indic{0 \le t-T_n< \xi_{n+1}} \mid T_n}
\notag\\&
=\E \sumno g(t-T_n),
\end{align}
where
\begin{align}\label{mb2}
  g(s):=\indic{s\ge0}\E\bigsqpar{\bigpar{Z(s)-Z(T_1)}\indic{T_1 > s}}
.\end{align}
We have by \eqref{e2}, for $s\ge0$,
\begin{align}\label{mb3}
  |g(s)|\le \E\bigsqpar{\bigpar{Z(s)-Z(T_1)}\indic{T_1 > s}}
\le \E\bigsqpar{2M_1\indic{T_1 > s}}=2h(s),
\end{align}
where we let $h(s)$ be as in \eqref{ma2} with $Y_1:=M_1$.
Then $h(s)$ is decreasing on $\ooo$ and 
\begin{align}
  \label{mb3b}
\intoo h(s)\dd s=\E[M_1T_1]<\infty
\end{align}
by the calculation in \eqref{ma5} with
$r=0$, and thus $h(s)$ is directly Riemann integrable.
Furthermore, since $Z(s)$ is assumed to be \cadlag, it follows from \eqref{mb2},
using \eqref{e2} and dominated convergence, that $g(s)$ also is \cadlag, and
in particular \aex{} continuous. Hence, using \eqref{mb3},
$g(s)$ too is directly Riemann
integrable, see \cite[Proposition 2.88(c)]{Serfozo}.
In the non-arithmetic case, 
\eqref{mb1}--\eqref{mb2} and
the key renewal theorem now yield, using Fubini's theorem justified by
\eqref{mb3} and \eqref{mb3b},
\begin{align}\label{mb4}
    \E \sqpar{Z(t)}&
\to \frac{1}{\mu}\intoo g(s)\dd s
=  \frac{1}{\mu}\intoo \E\bigsqpar{\bigpar{Z(s)-Z(T_1)}\indic{T_1 > s}}\dd s
\notag\\&
=\frac{1}{\mu}\E\lrsqpar{\int_0^{T_1}Z(s)\dd s - T_1 Z(T_1)}
.\end{align}
(This also follows by \cite[Theorem 2.45]{Serfozo}, applied to
$X(t):=Z(t)-Z(T_\taut)$.) 
In the arithmetic case, with span $d$, we obtain instead,
as \ttoo{} with $t\in d\bbN$,
\begin{align}\label{mb4d}
    \E \sqpar{Z(t)}&
\to \frac{d}{\mu}\sumko g(kd)
=  \frac{d}{\mu}\sumko \E\bigsqpar{\bigpar{Z(kd)-Z(T_1)}\indic{T_1 > kd}}
\notag\\&
=\frac{1}{\mu}\E\lrsqpar{d\sum_{k=0}^{T_1/d-1}Z(kd) - T_1 Z(T_1)}
.\end{align}
Since we have assumed $a=0$, 
these results \eqref{mb4} and \eqref{mb4d} show \eqref{te2} and \eqref{ted},
which completes the proof.
\end{proof}

\begin{ack}
I thank Fabian Burghart and Paul Th\'evenin
for inspiring this work by their proofs in \cite{Fabian&}.
\end{ack}

\newcommand\AAP{\emph{Adv. Appl. Probab.} }
\newcommand\JAP{\emph{J. Appl. Probab.} }
\newcommand\JAMS{\emph{J. \AMS} }
\newcommand\MAMS{\emph{Memoirs \AMS} }
\newcommand\PAMS{\emph{Proc. \AMS} }
\newcommand\TAMS{\emph{Trans. \AMS} }
\newcommand\AnnMS{\emph{Ann. Math. Statist.} }
\newcommand\AnnPr{\emph{Ann. Probab.} }
\newcommand\CPC{\emph{Combin. Probab. Comput.} }
\newcommand\JMAA{\emph{J. Math. Anal. Appl.} }
\newcommand\RSA{\emph{Random Structures Algorithms} }
\newcommand\DMTCS{\jour{Discr. Math. Theor. Comput. Sci.} }

\newcommand\AMS{Amer. Math. Soc.}
\newcommand\Springer{Springer-Verlag}
\newcommand\Wiley{Wiley}

\newcommand\vol{\textbf}
\newcommand\jour{\emph}
\newcommand\book{\emph}
\newcommand\inbook{\emph}
\def\no#1#2,{\unskip#2, no. #1,} 
\newcommand\toappear{\unskip, to appear}

\newcommand\arxiv[1]{\texttt{arXiv}:#1}
\newcommand\arXiv{\arxiv}

\newcommand\xand{and }
\renewcommand\xand{\& }

\def\nobibitem#1\par{}

\end{document}